\newtheorem{thm}{Theorem}
\newtheorem{lem}[thm]{Lemma}
\newtheorem{cor}[thm]{Corollary}
\theoremstyle{definition}
\theoremstyle{remark}
\newcommand{\bt}{{\, \bowtie}}
\newcommand{\FF}{\mathbb{F}}
\newcommand{\DD}{\mathcal{D}}
\newcommand{\HH}{\mathcal{H}}
\newcommand{\ZZ}{\mathbb{Z}}
\newcommand{\conn}{{\rm conn}}
\newcommand{\coind}{{\rm coind}}
\newcommand{\hdim}{\textrm{\rm h-dim}}
\title{Dominance complexes and vertex cover numbers of graphs}
\author{Takahiro Matsushita}
\address{Department of Mathematical Sciences, University of the Ryukyus, Nishihara-cho, Okinawa 903-0213, Japan}
\email{mtst@sci.u-ryukyu.ac.jp}
\subjclass[2010]{Primary 05C15; Secondary 55U10}
\keywords{dominance complex; independence complex; Alexander dual; vertex cover number}
\begin{document}

\baselineskip.525cm

\maketitle

\begin{abstract}
The dominance complex $D(G)$ of a simple graph $G = (V,E)$ is the simplicial complex consisting of the subsets of $V$ whose complements are dominating. We show that the connectivity of $D(G)$ plus $2$ is a lower bound for the vertex cover number $\tau(G)$ of $G$.
\end{abstract}


\section{Introduction}

For a simple graph $G = (V,E)$, a subset $S$ of $V$ is {\it dominating in $G$} if every vertex $v$ in $G$ is contained in $S$ or adjacent to an element in $S$. The {\it dominance complex $D(G)$} is the simplicial complex consisting of the subsets of $V$ whose complements are dominating. Dominance complex was considered in Ehrenborg and Hetyei \cite{EH}, and has been studied by several authors (see \cite{MT1}, \cite{MT2}, and \cite{Taylan}).

The goal of this paper is to establish a certain relationship between the domincance complex $D(G)$ and the vertex cover number $\tau(G)$ of a graph $G$. Recall that a \emph{vertex cover of a simple graph $G =(V,E)$} is a subset $S$ of $V$ such that every edge of $G$ contains at least one element of $S$. The \emph{vertex cover number $\tau(G)$ of $G$} is the smallest cardinality of a possible vertex cover of $G$, and is one of the most classical invariants in graph theory.

Before stating our main result, we review concrete examples of dominance complexes whose homotopy types are determined. Ehrenborg and Hetyei \cite{EH} showed that the dominance complex of a forest is homotopy equivalent to a sphere, and Marietti and Testa \cite{MT1} in fact showed $S^{\tau(G) - 1} \simeq D(G)$ when $G$ is a forest. Taylan generalized this result by Marietti and Testa to chordal graphs (see Theorem 5.4 of \cite{Taylan}). She also determined the homotopy types of the $P_3$-devoid complexes of cycles, which coincide with the dominance complexes of cycles (see also Theorem 4.6 of \cite{DS}). More precisely, Taylan showed
$$D(C_{4t}) \simeq S^{2t-1} \vee S^{2t-1} \vee S^{2t-1}, \; D(C_{4t + i}) \simeq S^{2t + i - 2} \;\;\; (i = 1,2,3).$$
Note that the vertex cover number of $C_n$ is $\lceil n / 2 \rceil$.

These results seem to suggest that there is some relationship between the vertex cover number $\tau(G)$ of $G$ and the connectivity $\conn(D(G))$ of the dominance complex $D(G)$ of $G$. The goal of this note is to show that a certain homotopy invariant of $D(G)$ related to the connectivity provides a lower bound for $\tau(G)$.

Let $\ZZ_2$ denote the cyclic group of order $2$. For a topological space $X$, let $\conn_{\ZZ_2}(X)$ be the largest number $n$ such that $i \le n$ implies $\widetilde{H}_i (X ; \ZZ_2) = 0$, and call it the {\it $\ZZ_2$-homological connectivity of $X$}. Note that the Hurewicz theorem (Theorem 4.32 \cite{Hatcher}) and the universal coefficient theorem (Theorem 3A.4 of \cite{Hatcher}) imply the inequality $\conn(X) \le \conn_{\ZZ_2}(X)$. Then our main result is formulated as follows:

\begin{thm} \label{thm 2}
For every simple graph $G$, the following inequality holds:
$$\conn_{\ZZ_2} (D(G)) + 2 \le \tau(G).$$
\end{thm}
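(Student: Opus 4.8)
The plan is to pass to the combinatorial Alexander dual, where the statement becomes a lower bound on a homological dimension, and then to produce an explicit sphere out of a maximum independent set. Writing $n = |V|$, the first step is to observe that the Alexander dual of $D(G)$ is the complex $D(G)^{\vee}$ of \emph{non-dominating} subsets of $V$, since $\sigma \in D(G)^{\vee}$ iff $V \setminus \sigma \notin D(G)$ iff $\sigma$ is not dominating. Combinatorial Alexander duality gives $\widetilde{H}_i(D(G); \ZZ_2) \cong \widetilde{H}^{\,n-i-3}(D(G)^{\vee}; \ZZ_2)$, so, using $\tau(G) = n - \alpha(G)$ where $\alpha(G)$ is the independence number, the desired inequality $\conn_{\ZZ_2}(D(G)) + 2 \le \tau(G)$ is equivalent to the assertion that $D(G)^{\vee}$ carries nonzero $\ZZ_2$-homology in \emph{some} degree $\ge \alpha(G) - 2$; that is, $\hdim_{\ZZ_2}(D(G)^{\vee}) \ge \alpha(G) - 2$.

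For the geometric input I would fix a maximum independent set $W$, $|W| = \alpha(G)$. Being maximal it is dominating, while each proper subset $W \setminus \{w\}$ fails to dominate $w$ (as $w$ has no neighbour in $W$) and is therefore non-dominating. Now $D(G)^{\vee} = \bigcup_{v \in V} \Delta(V \setminus N[v])$ is a union of simplices, and I would run the nerve lemma on the subfamily $\{\Delta(V \setminus N[w])\}_{w \in W}$: a subcollection has nonempty (hence contractible) intersection exactly when the corresponding vertices of $W$ do not dominate $G$, so by the previous observation the nerve of this subfamily is $\partial \Delta(W) \cong S^{\alpha(G)-2}$. Thus the subcomplex $Y := \bigcup_{w \in W} \Delta(V \setminus N[w]) \subseteq D(G)^{\vee}$ satisfies $Y \simeq S^{\alpha(G)-2}$, and dually $C := V \setminus W$ is a facet of $D(G)$ of dimension $\tau(G) - 1$ whose boundary sphere is the object of interest.

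The main obstacle is that this sphere need not be essential in $D(G)^{\vee}$: the inclusion $Y \hookrightarrow D(G)^{\vee}$ may kill its fundamental class, so the embedded sphere by itself does not deliver $\hdim_{\ZZ_2}(D(G)^{\vee}) \ge \alpha(G)-2$. The odd cycles $C_{4t+1}$ realize exactly this degenerate case, with the homology of $D(G)$ reappearing one degree lower, in degree $\tau - 2$ rather than $\tau - 1$, so any complete argument must account for the class being pushed up. To handle this I would argue by induction on $n$, using that $D(G)$ is the mapping cone of the inclusion $\mathrm{lk}_v\, D(G) \hookrightarrow \mathrm{del}_v\, D(G)$ for a suitably chosen vertex $v$, after identifying the link and deletion as \emph{relative} dominance complexes: in $\mathrm{del}_v\, D(G)$ only the vertices outside $N[v]$ still need to be dominated, while in $\mathrm{lk}_v\, D(G)$ the vertex $v$ must be dominated but is no longer available.

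Concretely, the engine is the long exact sequence of the pair $(D(G), D(G) \setminus \{C\})$, where $\widetilde{H}_\ast(D(G), D(G)\setminus\{C\}) \cong \widetilde{H}_\ast(\Delta(C), \partial\Delta(C))$ is $\ZZ_2$ concentrated in degree $\tau - 1$. If $\partial\Delta(C)$ bounds in $D(G) \setminus \{C\}$, the connecting map vanishes and one reads off $\widetilde{H}_{\tau-1}(D(G); \ZZ_2) \ne 0$ immediately; otherwise $D(G)\setminus\{C\}$ already carries a class in degree $\tau - 2$ and one recurses on this smaller complex. Carrying out this recursion—formulating the correct relative statement with a relative vertex-cover number, and verifying that in every case the resulting homology lands in degree at most $\tau(G) - 1$—is the crux, and the step I expect to require the most care, since it is precisely where the ``sphere dies'' case of the odd cycles must be absorbed.
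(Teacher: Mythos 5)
Your opening reduction is correct and matches the paper's: $D^\vee(G)$ is indeed the complex of non-dominating sets, combinatorial Alexander duality converts the theorem into the statement $\hdim_{\ZZ_2}(D^\vee(G)) \ge \alpha(G)-2$, and your nerve-lemma subcomplex $Y = \bigcup_{w \in W} \Delta(V \setminus N[w]) \simeq S^{\alpha(G)-2}$ is a valid (de-suspended) version of the sphere the paper constructs. You also correctly diagnose the obstacle: the class of $Y$ may die under inclusion into $D^\vee(G)$. But the mechanism you propose to handle that case is not a proof, and as sketched it cannot be completed. In the long exact sequence of the pair $\bigl(D(G), D(G)\setminus\{C\}\bigr)$, your second case --- where $\partial\Delta(C)$ is essential in $D(G)\setminus\{C\}$ --- yields no information about $D(G)$ itself: the connecting homomorphism kills exactly the class $[\partial\Delta(C)]$ you have produced, so $\widetilde{H}_{\tau-2}(D(G);\ZZ_2)$ is the quotient of $\widetilde{H}_{\tau-2}(D(G)\setminus\{C\};\ZZ_2)$ by that class, and nothing in your argument bounds this quotient away from zero. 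The proposed ``recursion'' has no well-defined inductive statement: $D(G)\setminus\{C\}$ is not the dominance complex of any graph, it has no distinguished facet to remove next, and the ``relative vertex-cover number'' formulation is never given. You flag this as the crux requiring the most care, and that is accurate --- but it is the entire content of the theorem. Everything before it (the embedded sphere) would be equally available if the theorem were false.

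The idea your sketch is missing, and the paper's actual route, is \emph{equivariance}. By a theorem of Nagel and Reiner, $\Sigma D^\vee(G) \simeq I(G^\bt)$, where $G^\bt$ is the bipartite graph on $\{+,-\}\times V(G)$ with $(+,v)$ adjacent to $(-,w)$ iff $v \in N[w]$; the involution $\gamma$ swapping $(+,v) \leftrightarrow (-,v)$ acts \emph{freely} on $I(G^\bt)$. The counterpart of your sphere, one dimension up, sits inside $I(G^\bt)$ as the boundary of the cross-polytope on the vertices $(\pm, w)$, $w \in W$: an honest $\ZZ_2$-subcomplex isomorphic to $S^{\alpha(G)-1}$ with the antipodal action, so $\coind(I(G^\bt)) \ge \alpha(G)-1$. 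The key lemma is then a Borsuk--Ulam-type inequality: for any finite free $\ZZ_2$-complex $X$ one has $\coind(X) \le \hdim_{\ZZ_2}(X)$, proved via the first Stiefel--Whitney class of the double cover $X \to \overline{X}$ and the Gysin sequence (if all homology above the coindex vanished, the powers $w_1(X)^k$ would be nonzero for all $k$, contradicting finite-dimensionality of $\overline{X}$). This forces $\widetilde{H}_j(I(G^\bt);\ZZ_2) \ne 0$ for \emph{some} $j \ge \alpha(G)-1$ whether or not your particular sphere is essential: if its fundamental class dies, freeness of the action still pushes nonzero $\ZZ_2$-homology into a higher degree. That is precisely what absorbs the $C_{4t+1}$ case that your exact-sequence recursion leaves open, and it replaces the inductive bookkeeping you anticipated with a single characteristic-class argument.
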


\begin{cor}
For every simple graph $G$, the dominance complex $D(G)$ is not contractible.
\end{cor}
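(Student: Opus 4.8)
The plan is to pass to the Alexander dual of $D(G)$ and reduce the statement to a single nonvanishing assertion. Write $n = |V|$ and let $\alpha(G) = n - \tau(G)$ be the independence number. First I would identify the Alexander dual $D(G)^{\vee} = \{S \subseteq V : V \setminus S \notin D(G)\}$ with the complex $\mathcal{N}$ of all \emph{non}-dominating subsets of $V$: indeed $V \setminus S \notin D(G)$ says precisely that $S$ is not dominating. Since a set is non-dominating exactly when it is disjoint from some closed neighborhood, one has $\mathcal{N} = \bigcup_{v \in V} 2^{V \setminus N[v]}$. Combinatorial Alexander duality over $\ZZ_2$ then gives $\widetilde{H}_i(D(G); \ZZ_2) \cong \widetilde{H}^{\,n - i - 3}(\mathcal{N}; \ZZ_2)$. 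Because $\conn_{\ZZ_2}(D(G)) \le \tau(G) - 2$ is equivalent to the existence of some $i \le \tau(G) - 1$ with $\widetilde{H}_i(D(G); \ZZ_2) \ne 0$, and because $i \le \tau(G) - 1$ corresponds under the duality to $j = n - i - 3 \ge \alpha(G) - 2$, the theorem reduces to: $\widetilde{H}^{\,j}(\mathcal{N}; \ZZ_2) \ne 0$ for some $j \ge \alpha(G) - 2$.

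Next I would exhibit a natural candidate class in the extreme degree $j = \alpha(G) - 2$. Fix a maximum independent set $A$, so $|A| = \alpha(G)$. Since $A$ is independent, $N[a] \cap A = \{a\}$ for each $a \in A$, so any proper subset $S \subsetneq A$ fails to dominate a vertex $a \in A \setminus S$ and hence lies in $\mathcal{N}$; on the other hand $A$ is a \emph{maximal} independent set, hence dominating, so $A \notin \mathcal{N}$. Therefore $\mathcal{N} \cap \Delta_A = \partial \Delta_A \cong S^{\alpha(G) - 2}$ sits inside $\mathcal{N}$ as an unfilled boundary sphere, yielding a cycle $[\partial \Delta_A] \in \widetilde{H}_{\alpha(G)-2}(\mathcal{N}; \ZZ_2)$.

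The main obstacle is that this class need not be nonzero. For $G = C_5$ the complex $\mathcal{N}$ is connected, so the $0$-sphere $\partial \Delta_A$ bounds, while in fact $\widetilde{H}_1(\mathcal{N}; \ZZ_2) \ne 0$; thus the required homology can genuinely live in a degree strictly above $\alpha(G) - 2$, and no single fixed-degree class will suffice. (Concretely, the retraction $V \to A$ one would use to certify $[\partial \Delta_A] \ne 0$ cannot in general be made simplicial.) So the real content is to show that $\widetilde{H}_\ast(\mathcal{N}; \ZZ_2)$ is nonzero \emph{somewhere} in the entire range $\ast \ge \alpha(G) - 2$, and I expect the work to concentrate here.

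To overcome this I would argue by induction on $|V|$. If $G$ has an isolated vertex $v$, then $v$ lies in no face of $D(G)$, whence $D(G) = D(G - v)$ and $\tau(G) = \tau(G - v)$, so the claim is inherited; hence one may assume $\delta(G) \ge 1$. For a suitably chosen vertex $v$ I would then feed the deletion–link decomposition into the long exact sequence $\cdots \to \widetilde{H}_i(\mathrm{del}_{D(G)}(v)) \to \widetilde{H}_i(D(G)) \to \widetilde{H}_{i-1}(\mathrm{lk}_{D(G)}(v)) \to \cdots$, and relate $\mathrm{del}_{D(G)}(v)$ and $\mathrm{lk}_{D(G)}(v)$ to dominance-type complexes of the smaller graphs obtained by deleting $v$ and its closed neighborhood (in a slightly enlarged class that allows a vertex to be marked "must be dominated"), tracking how $\tau$ decreases so that the inductively supplied class lands in the correct range. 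The delicate point is to choose $v$ and arrange the bookkeeping so that the relevant class survives the maps of the sequence rather than being cancelled; the $C_5$ phenomenon above shows this cannot be finessed by a degreewise argument alone.
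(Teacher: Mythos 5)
Your preliminary reductions are all correct: $D^\vee(G)$ is indeed the complex $\mathcal{N}$ of non-dominating sets, the Alexander duality bookkeeping is right, and your $C_5$ analysis correctly shows that the naive plan of certifying the single class $[\partial \Delta_A]$ in degree $\alpha(G)-2$ cannot work. But at exactly that point the proof stops. What remains --- showing $\widetilde{H}^{j}(\mathcal{N};\ZZ_2) \ne 0$ for \emph{some} $j \ge \alpha(G)-2$ (or even, for the corollary alone, in some degree at all) --- is the entire content of the result, and your inductive deletion--link scheme is only a plan: you do not specify which vertex $v$ to take, what the auxiliary ``marked'' complexes are, or why the inductively supplied class survives the maps of the long exact sequence; you yourself flag that survival as the delicate point. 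Long exact sequence arguments are precisely where such classes get cancelled, and nothing in the sketch rules that out, so this is a genuine gap rather than a routine verification left to the reader.

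The idea you are missing is \emph{equivariance}, which is how the paper converts your unfilled boundary sphere into nonvanishing homology somewhere in the right range without pinning down a degree. By a result of Nagel and Reiner, $\Sigma D^\vee(G) \simeq I(G^\bt)$, where $G^\bt$ is the bipartite graph on $\{+,-\}\times V$ with $(+,v)$ adjacent to $(-,w)$ iff $v \in N[w]$. The swap $(+,v) \leftrightarrow (-,v)$ is a \emph{free} involution on $I(G^\bt)$, and a maximum independent set $A$ yields not just $\partial\Delta_A$ but the boundary of a cross-polytope on the vertices $(\pm,a)$, $a \in A$: an antipodally embedded $S^{\alpha(G)-1}$, so $\coind\big(I(G^\bt)\big) \ge \alpha(G)-1$. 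The key lemma --- proved via the first Stiefel--Whitney class and the Gysin sequence of the double cover --- is that $\coind(X) \le \hdim_{\ZZ_2}(X)$ for every finite free $\ZZ_2$-complex $X$. This Borsuk--Ulam-type inequality forces $\widetilde{H}_*(I(G^\bt);\ZZ_2) \ne 0$ in some degree $\ge \alpha(G)-1$ \emph{even when the embedded sphere is null-homotopic}, which is exactly the failure mode your $C_5$ example exhibits: there $S^{\alpha(G)-1}=S^1$ sits antipodally inside $I(C_5^\bt)\simeq \Sigma\mathcal{N} \simeq S^2$ and bounds, yet the coindex argument still delivers $\hdim_{\ZZ_2} \ge 1$. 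Desuspending and applying your own duality reduction then gives the theorem, and the corollary follows since $\tau(G)$ is finite. If you want to salvage your outline, this equivariant step is what must replace the induction.
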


Note that in the examples of graphs mentioned above, the equality $\conn_{\ZZ_2}(D(G)) + 2 = \tau(G)$ holds except for the case $G = C_{4t + 1}$. In this case, these numbers differ by $1$.

In the proof of Theorem \ref{thm 2}, we show that the suspension $\Sigma (D(G)^\vee)$ of the combinatorial Alexander dual of the dominance complex of a graph $G$ is homotopy equivalent to the independence complex $I(G^\bt)$ of a graph $G^\bt$ defined in the next section. In that proof, we see that the independence complex of a hypergraph provides a simple formulation of a result by Nagel and Reiner \cite{NR} concerning independence complexes of bipartite graphs, following Tsukuda \cite{Tsukuda}.

The graph $G^\bt$ has a natural involution, and $I(G^\bt)$ becomes a free $\ZZ_2$-complex. Recall that the coindex of a $\ZZ_2$-space $X$ is the largest integer $n$ such that there exists a $\ZZ_2$-map from $S^n$ to $X$, which has several interesting applications in combinatorics (see Matou\v{s}ek \cite{Matousek}). By the definition of $G^\bt$, it will be seen that a lower bound of the coindex of $I(G^\bt)$ is provided by $\alpha(G) = |V| - \tau (G)$, {\it i.e.,} the size of a maximum independent set of $G$ (Lemma \ref{lem 7}). This observation is a key to the proof of Theorem \ref{thm 2}.

\section{Proofs}

We first show that the suspension of the combinatorial Alexander dual of the dominance complex $D(G)$ is homotopy equivalent to an independence complex $I(G^\bt)$ of a certain graph $G^\bt$. To see this, we use a theorem by Nagel and Reiner (Theorem \ref{thm 2.1}), which states that for every simplicial complex $K$, there is a bipartite graph $G_K$ such that the independence complex $I(G_K)$ of $G_K$ is homotopy equivalent to the suspension $\Sigma K$ of $K$. We first see that by using the independence complexes of hypergraphs, we can simply describe the relationship between $K$ and $G_K$. Recall that the independence complex of a simple graph was introduced in \cite{BK}, and has been extensively studied (see \cite{Adamaszek}, \cite{Barmak}, \cite{Engstrom}, \cite{Jonsson}, \cite{Kawamura}, \cite{Kozlov}, and \cite{Meshulam}) in topological combinatorics. The independence complexes of hypergraphs are a natural generalization of it, and has often appeared in the study of simplicial complexes (see \cite{Dochtermann}, \cite{EH}, and \cite{Woodroofe} for example).

Now we recall some concepts related to independence complexes of hypergraphs. A {\it hypergraph $\HH = (X, H)$} is a pair consisting of a set $X$ equipped with a multi-set $H$ on $X$. We consider that every hypergraph is finite, {\it i.e.,} $X$ and $H$ are finite. A subset $\sigma$ of $X$ is {\it independent} if there is no element of $H$ contained in $\sigma$. Then the independent sets of $\HH$ form a simplicial complex $I(\HH)$, and we call it the {\it independence complex of $\HH$}. Note that every simplicial complex $K$ is isomorphic to some independence complex of a hypergraph. Indeed, if we define the hypergraph $\HH_K = (V(K), H)$ where $H$ is the set of non-faces of $K$, then $I(\HH_K)$ and $K$ are isomorphic.

Next we recall the combinatorial Alexander dual of a simplicial complex. Let $K$ be a simplicial complex with underlying set $X$. Then the {\it combinatorial Alexander dual $K^\vee$} is the simplicial complex consisting of the subsets of $X$ whose complement is a non-face of $K$. Then a simplex of the Alexander dual $I(\HH)^\vee$ of the independence complex of a hypergraph $\HH = (X,H)$ is a subset $\sigma$ of $X$ such that $\sigma \cap \tau = \emptyset$ for some $\tau \in H$. Recall that a subset of $X$ which intersects every hyperedge of $\HH$ is said to be {\it transversal}. Thus $I(\HH)^\vee$ is the simplicial complex consisting of non-transversal sets.


For a hypergraph $\HH = (X, H)$, let $B_\HH$ denote the incidence graph of the hypergraph $\HH$. Namely, the vertex set of $B_\HH$ is the disjoint union $X \sqcup H$ of $X$ and $H$, $X$ and $H$ are independent sets in $B_\HH$, and $v \in X$ and $h \in H$ are adjacent if and only if $v \in h$.

For a simplicial complex $K$, Nagel and Reiner constructed a graph $G_K$ such that $I(G_K)$ is homotopy equivalent to $\Sigma K$. Using the terminology of independence complexes of hypergraphs, their construction of $G_K$ is simply described as follows (see Corollary 4.11 of \cite{Tsukuda}):

\begin{thm}[Proposition 6.2 of \cite{NR}, see also Theorem 3.8 of \cite{Barmak} and Theorem 3.2 of \cite{Jonsson}] \label{thm 2.1}
For every hypergraph $\HH$, there is a following homotopy equivalence:
$$\Sigma \big(I(\HH)^\vee \big) \simeq I(B_\HH).$$
\end{thm}

We consider the case of dominance complex. As Ehrenborg and Hetyei noted in \cite{EH}, the dominance complex $D(G)$ of a simple graph $G$ is simply described as the independence complex of some hypergraph $\DD_G$ defined as follows: The underlying set of $\DD_G$ is the vertex set $V(G)$ of $G$, and the set of hyperedges of $\DD_G$ is the multi-set $\{ N[v] \; | \; v \in V(G)\}$. Here $N[v]$ denotes the set
$\{ v\} \cup \{ w \in V \; | \; \{ v,w\} \in E(G)\}$. Then it is easy to see $D(G) = I(\DD_G)$.

Next we describe the incidence graph of $\DD_G$. Define the graph $G^\bt$ as follows: The vertex set of $G^\bt$ is $\{ + , -\} \times V(G)$, and the set of edges of $G^\bt$ is
$$E(G^\bt) = \big\{ \{ (+, v), (-,w)\} \; | \; v \in N[w]\big\}.$$
Note that $(+,v)$ and $(-,v)$ are adjacent in $G^{\bt}$ for each vertex $v$ of $G$. Clearly, $G^\bt$ is isomorphic to the incidence graph of $\DD_G$, and Theorem \ref{thm 2.1} implies the following:

\begin{cor}
For every graph $G$, there is the following homotopy equivalence:
$$\Sigma \big( D(G)^\vee \big) \simeq I(G^\bt).$$
\end{cor}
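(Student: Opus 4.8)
The plan is to apply Theorem \ref{thm 2.1} directly to the hypergraph $\DD_G$, so that the entire task reduces to two identifications: that $D^\vee(G)$ is the Alexander dual of the independence complex $I(\DD_G)$, and that $G^\bt$ is the associated bipartite graph $B_{\DD_G}$. First I would recall the already-noted fact $D(G) = I(\DD_G)$; since the combinatorial Alexander dual is defined purely in terms of a simplicial complex, this immediately gives $D^\vee(G) = I^\vee(\DD_G)$.

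The substantive step is to verify $G^\bt = B_{\DD_G}$. The underlying set of $\DD_G$ is $V(G)$, and its hyperedges form the multi-set $\{ N[v] \mid v \in V(G)\}$ indexed by $v \in V(G)$; hence the vertex set of $B_{\DD_G}$ is a disjoint union of two copies of $V(G)$, one recording the underlying set and one recording the hyperedges. I would identify $\{+\} \times V(G)$ with the underlying-set copy and $\{-\} \times V(G)$ with the hyperedge copy, sending $(-,w)$ to the hyperedge $N[w]$. Under this identification, $(+,v)$ and $(-,w)$ are adjacent in $B_{\DD_G}$ exactly when the underlying vertex $v$ lies in the hyperedge $N[w]$, that is, $v \in N[w]$, which is precisely the edge condition defining $E(G^\bt)$. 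It is essential here that the hyperedges are taken as a multi-set indexed by $V(G)$, since distinct vertices may share a closed neighborhood; this indexing matches the copy $\{-\} \times V(G)$ vertex for vertex.

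With these two identifications in hand, substituting $\HH = \DD_G$ into the equivalence $\Sigma(I^\vee(\HH)) \simeq I(B_\HH)$ supplied by Theorem \ref{thm 2.1} yields $\Sigma(D^\vee(G)) \simeq I(G^\bt)$, as claimed. I do not expect a genuine obstacle here: the content is entirely the bookkeeping of the second step, and the symmetry $v \in N[w] \iff w \in N[v]$ of closed neighborhoods makes the matching of edge conditions manifestly consistent, so no further homotopy-theoretic input beyond Theorem \ref{thm 2.1} is needed.
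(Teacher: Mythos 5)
Your proposal is correct and matches the paper's argument exactly: the paper likewise observes $D(G) = I(\DD_G)$, notes (without spelling out the bookkeeping) that $G^\bt$ is the associated bipartite graph $B_{\DD_G}$, and applies Theorem \ref{thm 2.1}. Your explicit verification of the identification $G^\bt = B_{\DD_G}$, including the multi-set indexing of hyperedges by $V(G)$, is just a more careful write-up of the step the paper dismisses as clear.
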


Note that $G^\bt$ has a natural involution exchanging $(+,v)$ and $(-,v)$, and we write $\gamma$ to indicate the involution. Then $\gamma$ provides a $\ZZ_2$-action of $I(G^\bt)$.

\begin{lem}
For every graph $G$, the $\ZZ_2$-action of $I(G^\bt)$ is free.
\end{lem}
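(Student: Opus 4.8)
The plan is to verify freeness combinatorially, via the standard criterion that a simplicial involution $\gamma$ acts freely on the geometric realization $|I(G^\bt)|$ precisely when no nonempty simplex of $I(G^\bt)$ is fixed setwise by $\gamma$. In fact I only need one direction of this: if $\gamma$ fixed a point $x \in |I(G^\bt)|$, then $x$ would lie in the relative interior of a unique carrier simplex $\sigma$, and since $\gamma$ maps relative interiors to relative interiors, $\gamma$ would have to preserve $\sigma$ as a set. (Conversely, any $\gamma$-invariant simplex does produce a fixed point, namely its barycenter, since permuting the vertices leaves their average fixed; but I will not need this.) So it suffices to rule out nonempty $\gamma$-invariant simplices.

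Next I would take an arbitrary $\gamma$-invariant simplex $\sigma$ of $I(G^\bt)$---equivalently, a $\gamma$-invariant independent set of $G^\bt$---and show it is empty. Suppose it is not, and choose any vertex of $\sigma$; it has the form $(+,v)$ or $(-,v)$ for some $v \in V(G)$, and since $\gamma$ interchanges these two vertices while fixing $\sigma$ setwise, both $(+,v)$ and $(-,v)$ belong to $\sigma$. The crucial point is that $\{(+,v),(-,v)\}$ is always an edge of $G^\bt$: by definition $(+,v)$ is adjacent to $(-,w)$ exactly when $v \in N[w]$, and $v \in N[v]$ holds for every vertex $v$. Thus $\sigma$ contains both endpoints of an edge of $G^\bt$, contradicting independence. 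Hence $\sigma = \emptyset$, and the action is free.

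I expect essentially no obstacle here; the entire content is the observation that the closed neighborhood $N[v]$ contains $v$ itself, which forces every \emph{diagonal} pair $\{(+,v),(-,v)\}$ to be an edge of $G^\bt$ and hence forbids any independent set from containing a full $\gamma$-orbit. The only place demanding a little care is phrasing the freeness criterion correctly at the level of geometric realizations (working with carrier simplices and barycenters rather than with vertices alone), since a simplicial involution can act without fixed vertices yet still fix a higher-dimensional simplex setwise. Once that reduction is in place, the combinatorial step is a one-line contradiction.
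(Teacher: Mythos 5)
Your proof is correct and follows essentially the same route as the paper: both reduce freeness to a combinatorial condition on simplices and then derive the identical contradiction, namely that a simplex meeting its $\gamma$-image (or fixed setwise by $\gamma$) would contain both $(+,v)$ and $(-,v)$, which are adjacent in $G^\bt$ because $v \in N[v]$. The only cosmetic difference is that the paper verifies the slightly stronger condition $\sigma \cap \gamma\sigma = \emptyset$ for every simplex $\sigma$, while you rule out setwise-invariant simplices via the carrier argument; for an involution these criteria are equivalent, and your explicit justification of the topological reduction is a point the paper leaves implicit.
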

\begin{proof}
Let $\sigma$ be a simplex of $I(G^\bt)$. It suffices to show $\sigma \cap \gamma \sigma = \emptyset$. Suppose $\sigma \cap \gamma \sigma \ne \emptyset$ and let $(\varepsilon, v) \in \sigma \cap \gamma \sigma$. This means $(+,v), (-,v) \in \sigma$. Since $\sigma$ is an independent set in $G^\bt$, this is a contradiction.
\end{proof}

For a free $\ZZ_2$-space $X$, the {\it coindex $\coind(X)$ of $X$} is the largest integer $n$ such that there is a continuous $\ZZ_2$-map from $S^n$ to $X$. Here we consider the involution of $S^n$ as the antipodal map. Recall that $\alpha(G)$ denotes the size of a maximum independent set of a simple graph $G$.

\begin{lem} \label{lem 7}
Let $G$ be a graph. Then the complex $I(G^\bt)$ has a $\ZZ_2$-subcomplex which is $\ZZ_2$-homeomorphic to $S^{\alpha(G) - 1}$. In particular, the inequality $\alpha(G) - 1 \le \coind(I(G^\bt))$ holds.
\end{lem}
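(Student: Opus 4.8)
The plan is to exhibit the boundary complex of a cross-polytope explicitly inside $I(G^\bt)$. First I would fix a maximum independent set $A = \{ v_1, \dots, v_k \}$ of $G$, so that $k = \alpha(G)$, and consider the full subcomplex $L$ of $I(G^\bt)$ spanned by the $2k$ vertices $\{ +, - \} \times A$. Since this vertex set is invariant under $\gamma$ and the independence condition in $G^\bt$ is preserved by $\gamma$, the subcomplex $L$ is automatically a $\ZZ_2$-subcomplex of $I(G^\bt)$.

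The key step is to identify $L$ combinatorially. A subset $\sigma \subseteq \{ +, - \} \times A$ fails to be independent in $G^\bt$ precisely when it contains an edge, i.e.\ when there exist $(+, v_i), (-, v_j) \in \sigma$ with $v_i \in N[v_j]$. For $v_i, v_j \in A$, however, the relation $v_i \in N[v_j]$ forces $i = j$: if $i \neq j$, then $v_i \in N[v_j]$ would mean $v_i$ and $v_j$ are adjacent, contradicting that $A$ is independent in $G$. Hence $\sigma$ is independent in $G^\bt$ if and only if it contains at most one of $(+, v_i)$, $(-, v_i)$ for each $i$. This is exactly the face condition for the boundary of the $k$-dimensional cross-polytope, with $(+, v_i)$ and $(-, v_i)$ playing the role of the antipodal vertex pair $\pm e_i$. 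Therefore $L$ is isomorphic, as a simplicial complex, to $\partial(\text{cross-polytope})$, which is homeomorphic to $S^{k-1}$.

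Finally I would check that this isomorphism is $\ZZ_2$-equivariant. The involution $\gamma$ interchanges $(+, v_i)$ and $(-, v_i)$, which corresponds to $e_i \mapsto -e_i$ on the cross-polytope, i.e.\ to the antipodal map on $S^{k-1}$. This produces a $\ZZ_2$-homeomorphism $L \cong S^{\alpha(G) - 1}$, and composing with the inclusion $L \hookrightarrow I(G^\bt)$ gives a $\ZZ_2$-map $S^{\alpha(G) - 1} \to I(G^\bt)$, whence $\alpha(G) - 1 \le \coind(I(G^\bt))$.

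I expect the only real content to lie in the combinatorial identification of the second step; the crucial input is the independence of $A$, which is precisely what collapses the edge condition to the single-coordinate constraint and so reproduces the cross-polytope. Once that is in place, both the equivariance of the homeomorphism and the resulting coindex bound are immediate.
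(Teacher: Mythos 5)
Your proof is correct and follows essentially the same approach as the paper: both exhibit the boundary of the $\alpha(G)$-dimensional cross-polytope inside $I(G^\bt)$ on the vertices $\{+,-\}\times A$ for a maximum independent set $A$, with $\gamma$ acting antipodally. If anything, your write-up is slightly more complete, since you spell out the combinatorial identification (independence of $A$ kills all cross edges $\{(+,v_i),(-,v_j)\}$ with $i \ne j$, while $v_i \in N[v_i]$ makes each antipodal pair a non-face) that the paper dismisses with ``this is clearly an inclusion.''
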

\begin{proof}
Let $A_n$ be the boundary of $(n+1)$-dimensional cross polytope. Namely, the vertex set of $A_n$ is $\{ \pm 1, \cdots, \pm (n+1)\}$ and a subset $\sigma$ of it is a simplex if and only if there is no $i$ satisfying $\{ \pm i\} \subset \sigma$. Then $|A_n|$ is homeomorphic to $S^n$.

Let $\sigma = \{ v_1, \cdots, v_{\alpha(G)}\}$ be a maximum independent set of $G$. Define the simplicial map $f \colon A_{\alpha(G) - 1} \to I(G^\bt)$ by sending $+i$ to $(+, v_i)$ and $-i$ to $(-, v_i)$. This is clearly an inclusion from $A_{\alpha(G) - 1}$ to $I(G^\bt)$ which is $\ZZ_2$-equivariant. This completes the proof.
\end{proof}

Next we observe that the coindex of a free $\ZZ_2$-space $X$ gives a restriction of the homology groups of $X$. Let $\hdim_{\ZZ_2}(X)$ be the maximum integer $n$ such that $\widetilde{H}_n(X ; \ZZ_2) \ne 0$. Then we have the following:

\begin{lem} \label{lem 6}
For a finite free $\ZZ_2$-simplicial complex $X$, the following inequality holds:
$$\coind(X) \le \hdim_{\ZZ_2}(X)$$
\end{lem}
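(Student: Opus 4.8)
The plan is to pass to the orbit space and to exploit the mod~$2$ cohomology of the associated double cover. Write $n = \coind(X)$ and first treat the main case $n \ge 1$. By definition there is a $\ZZ_2$-map $f \colon S^n \to X$. Since the $\ZZ_2$-action on $X$ is free, the quotient map $\pi \colon X \to B := X/\ZZ_2$ is a double cover, classified by a characteristic class $w \in H^1(B ; \ZZ_2)$. The map $f$ descends to $\bar f \colon \mathbb{RP}^n \to B$, and because $f$ is equivariant the pullback of $\pi$ along $\bar f$ is the standard cover $S^n \to \mathbb{RP}^n$. By naturality of the classifying class, $\bar f^\ast(w)$ is the generator $w_1 \in H^1(\mathbb{RP}^n ; \ZZ_2)$, so $\bar f^\ast(w^n) = w_1^{\,n} \ne 0$, which forces $w^n \ne 0$ in $H^n(B ; \ZZ_2)$.

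Next I would feed this into the mod~$2$ Gysin sequence of $\pi$, viewing $X$ as the $S^0$-bundle of the real line bundle with first Stiefel--Whitney class $w$:
$$\cdots \to H^{k-1}(B;\ZZ_2) \xrightarrow{\,\cup w\,} H^k(B;\ZZ_2) \xrightarrow{\,\pi^\ast\,} H^k(X;\ZZ_2) \xrightarrow{\,\pi_!\,} H^k(B;\ZZ_2) \xrightarrow{\,\cup w\,} H^{k+1}(B;\ZZ_2) \to \cdots$$
The heart of the argument is a proof by contradiction. Suppose $\hdim_{\ZZ_2}(X) < n$, that is, $\widetilde H^m(X ; \ZZ_2) = 0$ for all $m \ge n$ (over the field $\ZZ_2$, homology and cohomology have the same dimensions). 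For each $k \ge n$ we then have $H^k(X ; \ZZ_2) = 0$, and exactness at the copy of $H^k(B;\ZZ_2)$ lying between $\pi_!$ and $\cup w$ gives $\operatorname{im}(\pi_!) = \ker(\cup w) = 0$; hence $\cup w \colon H^k(B ; \ZZ_2) \to H^{k+1}(B ; \ZZ_2)$ is injective for every $k \ge n$.

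Finiteness of $X$ now closes the argument: $B$ is a finite complex, so $H^m(B ; \ZZ_2) = 0$ once $m > \dim B$. Composing the injections $H^n(B;\ZZ_2) \hookrightarrow H^{n+1}(B;\ZZ_2) \hookrightarrow \cdots$ up into a vanishing group shows $H^n(B ; \ZZ_2) = 0$, contradicting $w^n \ne 0$. Therefore $\widetilde H^m(X ; \ZZ_2) \ne 0$ for some $m \ge n$, i.e. $\hdim_{\ZZ_2}(X) \ge n = \coind(X)$. For the remaining case $\coind(X) \le 0$, one notes that a nonempty finite free $\ZZ_2$-complex is never $\ZZ_2$-acyclic: otherwise the same Gysin sequence would force $w^k \ne 0$ for all $k$, contradicting finite-dimensionality, so $\hdim_{\ZZ_2}(X) \ge 0$ suffices.

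I expect the main obstacle to be the bookkeeping around the Gysin sequence: arranging the maps and their degrees so that the hypothesis $H^k(X;\ZZ_2)=0$ yields \emph{injectivity} (not surjectivity) of $\cup w$, and pinning down $\bar f^\ast(w) = w_1$ via naturality of the characteristic class. Keeping track of reduced versus unreduced cohomology in low degrees, and checking that the orbit space of a finite free simplicial $\ZZ_2$-action is again a finite, hence finite-dimensional, complex, are the other points that require care.
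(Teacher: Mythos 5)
Your proposal is correct and follows essentially the same route as the paper's proof: identify the first Stiefel--Whitney class $w$ of the double cover $X \to X/\ZZ_2$, use naturality along the map induced by the $\ZZ_2$-map $S^n \to X$ to conclude $w^n \ne 0$, then use the mod $2$ Gysin sequence to show that vanishing of $H^k(X;\ZZ_2)$ for $k \ge n$ makes $\cup\, w$ injective in those degrees, contradicting finite-dimensionality of the orbit space. Your write-up is in fact slightly more careful than the paper's (explicit handling of the case $\coind(X) \le 0$ and of the reduced-versus-unreduced bookkeeping), but the underlying argument is the same.
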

\begin{proof}
Suppose that $n = \coind(X) > \hdim_{\FF_2}(X)$. Let $\overline{X}$ denote the orbit space of $X$ and $w_1(X)$ the $1$st Stiefel-Whitney class of the double cover $X \xrightarrow{p} \overline{X}$ (see \cite{Kozlov} or \cite{MS2}). Since there is a $\ZZ_2$-map $S^n \to X$ and $w_1(S^n)^n \ne 0$, the naturality of $w_1$ implies $0 \ne w_1(X)^n \in H^n(\overline{X} ; \ZZ_2)$. By the Gysin sequence for the double cover (see Corollary 12.3 of \cite{MS2}), we have the following exact sequence:
$$H^k(X ; \ZZ_2) \to H^k(\overline{X} ; \ZZ_2) \xrightarrow{\cup w_1(X)} H^{k+1}(\overline{X} ; \ZZ_2)$$
Since $\hdim_{\ZZ_2}(X) < n$, we have that $H^n(X ; \ZZ_2) = 0$ and hence the map  $H^n(\overline{X} ; \ZZ_2) \xrightarrow{\cup w_1(X)} H^{n+1}(\overline{X} ; \ZZ_2)$ is injective. Thus we have $w_1(X)^{n+1} \ne 0$. By induction, we have that $0 \ne w_1(X)^k \in H^k(\overline{X} ; \ZZ_2)$ for every $k > n$. This is a contradiction since $\overline{X}$ is a finite complex.
\end{proof}

We are now ready to complete the proof of Theorem \ref{thm 2}. Set $k = \conn_{\ZZ_2}(D(G))$. The combinatorial Alexander duality theorem (see \cite{MS1}) implies $\hdim_{\ZZ_2}(D(G)^\vee ) = |V| - k - 4$. Thus we have 
$$\alpha(G) - 1 \le \coind(I(G^\bt)) \le \hdim_{\ZZ_2}(I(G^\bt)) = \hdim_{\ZZ_2}\big( \Sigma (D(G)^\vee) \big) = |V| - k - 3.$$
Here the first and second inequalities follow from Lemma \ref{lem 6} and Lemma \ref{lem 7}, respectively. Thus we have
$$\conn_{\ZZ_2}\big( D(G) \big) + 2 = k + 2 \le |V| - \alpha(G) = \tau(G)$$
This completes the proof.

\section*{Acknowledgment}
The author is partially supported by JSPS KAKENHI 19K14536. The author thanks the referee for useful comments. The author states that there is no conflict of interests.


\begin{thebibliography}{99}
\bibitem{Adamaszek} M. Adamaszek; {\it Splittings of independence complexes and the powers of cycles,} J. Comb. Theory Ser. A, 119 (2012) 1031-1047.

\bibitem{BK} E. Babson, D.N. Kozlov; {\it Proof of the Lov\'asz conjecture,} Ann. of Math. 165 (2007) 965-1007.

\bibitem{Barmak} J. A. Barmak; {\it Star clusters in independence complexes of graphs,} Adv. Math. 241 (2013) 33-57.

\bibitem{DS} P. Deshpande, A. Singh; {\it Higher independence complexes of graphs and their homotopy types,} to appear in Journal of the Ramanujan Mathematical Society (2021).

\bibitem{Dochtermann} A. Dochtermann; {\it Exposed circuits, linear quotients, and chordal clutters,} J. Comb. Ser. Theory A, 177 (2021)

\bibitem{EH} R. Ehrenborg, G. Hetyei; {\it The topology of independence complex,} Eur. J. Comb. {\bf 27} (2006) 906-923.

\bibitem{Engstrom} A. Engstr\"{o}m; {\it Complexes of directed trees and independence complexes,} Discrete Math. 309 (2009) 3299-3309.

\bibitem{Hatcher} A. Hatcher; Algebraic topology, Cambridge University Press, 2001.

\bibitem{Jonsson} J. Jonsson, {\it On the topology of independence complexes of triangle-free graphs,} Preprint.

\bibitem{Kawamura} K. Kawamura, {\it Independence complexes of chordal graphs,} Discrete Math., 310 (2010) 2204-2211.

\bibitem{Kozlov} D.N. Kozlov; {\it Complexes of directed trees,} J. Comb. Theory Ser. A, 88 (1999) 112-122.

\bibitem{Kozlov book} D.N. Kozlov; Combinatorial algebraic topology, Springer, Berlin, Algorithms and Computation in Mathematics, Vol. 21, 2008.

\bibitem{MT1} M. Marietti, D. Testa; {\it A uniform approach to complexes arising from forests,} Electron. J. Comb. {\bf 15} (2008).

\bibitem{MT2} M. Marietti, D. Testa; {\it Cores of simplicial complexes,} Discrete Comput. Geom. {\bf 40} (2008), 444-468.

\bibitem{Matousek} J. Matou\v{s}ek; Using the Borsuk-Ulam Theorem, Lectures on Topological Methods in Combinatorics and Geometry, Springer, Universitext, second edition, 2007.

\bibitem{Meshulam} R. Meshulam; {\it Domination numbers and homology,} J. Comb. Theory Ser. A, 102 (2003) 321-330.

\bibitem{MS1} E. Miller, B. Sturmfels; Combinatorial Commutative Algebra. Graduate Texts in Mathematics. 227. New York, NY: Springer-Verlag, 2005.

\bibitem{MS2} J.W. Milnor, J.D. Stasheff; Characteristic Calsses, Princeton University Press, 1974.

\bibitem{Taylan} D. Taylan; {\it Matching trees for simplicial complexes and homotopy type of devoid complexes of graphs,} Order, {\bf 33} (2016), 459-476.

\bibitem{Tsukuda} S. Tsukuda; {\it Independence complexes and incidence graphs,} Contrib. to Discret. Math. Volume 12, Number 1, (2015) 28-46.

\bibitem{NR} U. Nagel, V. Reiner; {\it Betti numbers of monomial ideals and shifted skew shapes,} Electron. J. Comb. {\bf 16} (2009).

\bibitem{Woodroofe} R. Woodroofe; {\it Chordal and sequentially Cohen-Macaulay clutters,} Electron. J. Comb. 18 (2011).

\end{thebibliography}
\end{document}